\theoremstyle{plain}
\newtheorem{theorem}{Theorem}
\newtheorem*{proposition}{Proposition}
\newcommand{\tre}{\text{Re}}
\newcommand{\tim}{\text{Im}}
\theoremstyle{definition}
\newtheorem*{remark}{Remark}
\begin{document}
\title{Notes on $\log(\zeta(s))^{\prime\prime}$}
\author{Jeffrey Stopple}
\begin{abstract}
Motivated by the connection to the pair correlation of the Riemann zeros, we investigate the second derivative of the logarithm of the Riemann zeta function, in particular the zeros of this function. 
Theorem 1 gives a zero-free region.  Theorem 2 gives an asymptotic estimate for the number of nontrivial zeros to height $T$.  Theorem 3 is a zero density estimate.
\end{abstract}
\email{stopple@math.ucsb.edu}\address{Mathematics Department, UC Santa Barbara, Santa Barbara CA 93106}
\keywords{Riemann zeta function, logarithmic derivative}
\subjclass[2000]{11M06,11M41,11M50}

\maketitle

Bogomolny and Keating  \cite{BoKe} were the first to observe that the function
%$(\zeta(s)\zeta^{\prime\prime}(s)-\zeta^\prime(s)^2)/\zeta(s)^2$ 
$(\zeta^\prime(s)/\zeta(s))^\prime$
appears in the pair correlation for the Riemann zeros\footnote{See also the recent work of Rodgers \cite{BRodgers}, and Ford and Zaharescu \cite{FZ}.}.   Berry and Keating  \cite{BK} wrote in that context
\begin{quote}
\emph{\lq\lq The appearance of $\zeta(s)$ indicates an astonishing resurgence property of the zeros: in the pair correlation of high Riemann zeros, the low Riemann zeros appear as resonances.\rq\rq}
\end{quote}
There has been extensive investigation of the zeros of $\zeta^\prime(s)$ and their connection to the Riemann Hypothesis, via the logarithmic derivative $\zeta^\prime/\zeta(s)$.  However there seems to be nothing in the literature about the zeros of the derivative
\[
\log(\zeta(s))^{\prime\prime}=\left(\frac{\zeta^\prime(s)}{\zeta(s)}\right)^\prime=\frac{\zeta(s)\zeta^{\prime\prime}(s)-\zeta^\prime(s)^2}{\zeta(s)^2}.
\]
The connection to the pair correlation of the Riemann zeros is  motivation for further study.  

Further motivation comes from Montgomery's review in \emph{Math.\ Reviews} of Levinson \cite{Levinson}, in which he says \lq\lq The author's method can be applied to functions other than $G(s)$, and in particular one may use differential operators of higher order. Whether sharper results can be obtained in this manner remains to be seen.\rq\rq

\begin{figure*}
\begin{center}
\includegraphics[scale=.8, viewport=0 0 400 250,clip]{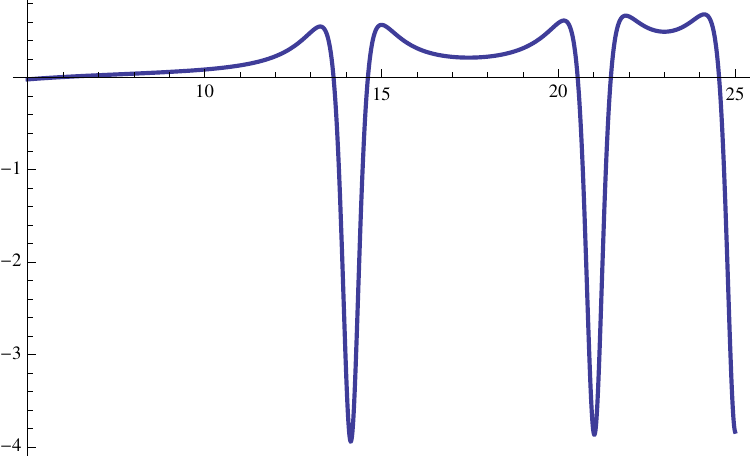}
\caption{$\tre((\zeta^\prime/\zeta)^\prime(1+it))$ is the resurgent contribution of $\zeta(s)$ to pair correlation.}\label{F:0}
\end{center}
\end{figure*}

\subsubsection*{Notation}  We let
\[
\nu(s)=\zeta(s)\zeta^{\prime\prime}(s)-\zeta^\prime(s)^2.
\]

\subsection*{Elementary facts}
Near $s=1$,
\[
\log(\zeta(s))^{\prime\prime}=\frac{1}{(s-1)^2}+O(1).
\]
Near a zero $\rho$ of $\zeta(s)$ of order $n_\rho$,
\[
\log(\zeta(s))^{\prime\prime}=\frac{-n_\rho}{(s-\rho)^2}+O(1),
\]
and so $\nu(s)$ has a zero of order $2n_\rho-2$.
In particular for a simple zero of $\zeta(s)$, this tells us that $\nu(\rho)\ne 0$.
There are no other poles.  The zeros of $\log(\zeta(s))^{\prime\prime}$ are the zeros of $\nu(s)$, exclusive of any possible multiple zeros of $\zeta(s)$.  

We have that, for $\tre(s)>1$, 
\begin{equation}\label{Eq:an1}
\nu(s)
=\sum_n\left(\sum_{d|n}\log(d)^2-\log(d)\log(n/d)\right)n^{-s}.
\end{equation}
With $\Lambda(n)$ Von Mangoldt's function, and $\tau(n)$ the divisor function we have that
\[
\log(\zeta(s))^{\prime\prime}=\sum_n \Lambda(n)\log(n) n^{-s},\qquad
\zeta(s)^2=\sum_n \tau(n)n^{-s}.
\]
Thus we also have that
\begin{equation}\label{Eq:an2}
\nu(s)
=\sum_n \left(\sum_{d|n}\Lambda(d)\log(d)\tau(n/d)\right)n^{-s}.
\end{equation}
We will let $a(n)$ denote the Dirichlet series coefficients of $\nu(s)$, given by either \eqref{Eq:an1} and \eqref{Eq:an2}.  Let
\[
A(x)=\sum_{n<x} a(n).
\]
We have that for $c>1$,
\[
A(x)=\frac{1}{2\pi i}\int_{c-i\infty}^{c+i\infty}
\nu(w)\frac{x^w}{w}\, dw.
\]
Moving the contour past the pole at $s=1$, we have that for $0<c<1$
\begin{equation}\label{Eq:integral}
A(x)=
x\cdot p(\log(x))+\frac{1}{2\pi i}\int_{c-i\infty}^{c+i\infty}
\nu(w)\frac{x^w}{w}\, dw,
\end{equation}
where
\[
p(t)=\frac{t^3}{6}+\left(C_0 -\frac{1}{2}\right)t^2+\left(1-4 C_1-2C_0 \right)t+4 C _2+4 C_1+2C_0 -1,
\]
with $C_0$ is the Euler constant, and $C_1$ and $C_2$ are Stieltjes constants.  With $p(t)$ as above one can show by Euler MacLaurin Summation \cite[Appendix B]{MV} and  the \lq method of the hyperbola\rq\ \cite[(2.9)]{MV} that
\begin{equation}\label{Eq:1}
A(x)=
x\cdot p(\log(x))+O\left(x^{1/2}\log(x)^2\right),
\end{equation}
i.e., the integral in \eqref{Eq:integral} is $O(x^{1/2}\log(x)^2)$.  

\subsection*{Functional Equation}
As usual let
\begin{align*}
\chi(s)=&2 (2\pi)^{s-1}\sin(\pi s/2)\Gamma(1-s)\\
=&\frac{\pi^{(s-1)/2}\Gamma((1-s)/2)}{\pi^{-s/2}\Gamma(s/2)}.
\end{align*}
Differentiating the functional equation 
$
\zeta(s)=\chi(s)\zeta(1-s)
$
we deduce that
\begin{multline}\label{Eq:FE}
\nu(s)=\\
\chi^2(s) \left(\nu(1-s)
+\left(\psi^\prime(1-s)-(\pi/2)^2\csc(\pi s/2)^2 \right) \zeta (1-s)^2\right).
\end{multline}
Here $\psi^\prime(s)$ denotes the derivative of the \textsc{digamma} function
\[
\psi(s)=\frac{\Gamma^\prime(s)}{\Gamma(s)}.
\]
Stirling's Formula tells us that as $s\to\infty$ in the region $|\arg(s)|\le \pi-\delta$,
\[
\psi^\prime(s)=1/s+O(1/s^2).
\]
As $t\to\infty$ we have that for $\sigma>a$ fixed,
\begin{gather}
\chi^2(s)\ll t^{1-2\sigma}\\
\chi^2(s)\left(\psi^\prime(1-s)-(\pi/2)^2\csc(\pi s/2)^2\right) \ll t^{-2\sigma}.
\end{gather}
Thus as $s\to\infty$ in the region $|\arg(s)|\le \pi-\delta$,
\begin{equation}
\nu(s)=
\begin{cases}\label{Eq:6}
O(1)&\sigma\ge 1+\delta>1\\
O(t^{1-2\sigma})&\sigma\le-\delta<0.
\end{cases}
\end{equation}

From the functional equation
\[
\zeta(1-s)=2(2\pi)^{-s}\cos(\pi s/2)\Gamma(s)\zeta(s),
\]
we deduce
\begin{equation}
\log(\zeta(1-s))^{\prime\prime}=-\frac{\pi^2}{4}\sec^2(\pi s/2)+\psi^\prime(s)+\log(\zeta(s))^{\prime\prime}.
\end{equation}

\subsection*{Asymptotics}
With $a(n)\ll n^\epsilon$ we can estimate the sum of the series for $n\ge 3$ to obtain
\[
\log(\zeta(s))^{\prime\prime}=\frac{\log(2)^2}{2^s}+O\left(\frac{\exp(-\sigma)}{1+\epsilon-\sigma}\right)\quad\text{for }\sigma>1+\epsilon.
\]
Now $|\sec^2(\pi s/2)|\ll \exp(-\pi t)$.  Thus we have
\begin{proposition}  As $s\to \infty$ in a vertical strip $1+\epsilon< \sigma<\sigma_0$,
\begin{equation}
\log(\zeta(1-s))^{\prime\prime}=\frac{\log(2)^2}{2^{s}}+O\left(\frac{\exp(-\sigma)}{1+\epsilon-\sigma}\right)+O\left(\frac{1}{s}\right).
\end{equation}
On the other hand, if $t\to\infty$ with $|s|^2<2^\sigma$, then
\begin{equation}\label{Eq:otoh}
\log(\zeta(1-s))^{\prime\prime}=\frac{1}{s}+O\left(\frac{1}{s^2}\right).
\end{equation}
\end{proposition}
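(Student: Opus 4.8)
The plan is to substitute into the functional-equation identity
\[
\log(\zeta(1-s))^{\prime\prime}=-\frac{\pi^2}{4}\sec^2(\pi s/2)+\psi^\prime(s)+\log(\zeta(s))^{\prime\prime}
\]
the three estimates already assembled above: the Dirichlet-series expansion $\log(\zeta(s))^{\prime\prime}=\log(2)^2\,2^{-s}+O\bigl(\exp(-\sigma)/(1+\epsilon-\sigma)\bigr)$, valid for $\sigma>1+\epsilon$; Stirling's asymptotic $\psi^\prime(s)=1/s+O(1/s^2)$, valid as $|s|\to\infty$ with $|\arg s|\le\pi-\delta$; and the bound $\tfrac{\pi^2}{4}|\sec^2(\pi s/2)|\ll\exp(-\pi t)$. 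In both regimes the hypothesis forces $t\to\infty$, hence $|s|\to\infty$ with $|\arg s|<\pi/2$, so all three inputs apply; in the first regime I would shrink $\epsilon$ slightly if necessary so that the Dirichlet-series estimate holds throughout the closed strip. What remains is a comparison of the sizes of the three contributions.

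For the first statement $\sigma$ lies in the bounded interval $[1+\epsilon,\sigma_0)$, so $s\to\infty$ means $t\to\infty$ with $t\asymp|s|$. Then $\psi^\prime(s)=O(1/s)$ and $\tfrac{\pi^2}{4}|\sec^2(\pi s/2)|\ll\exp(-\pi t)=o(1/s)$, whereas $\log(2)^2\,2^{-s}$ and $\exp(-\sigma)/(1+\epsilon-\sigma)$, both of modulus $\asymp 1$, must be retained. Adding the three pieces produces exactly the claimed expansion.

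For the second statement the hypothesis $|s|^2<2^\sigma$ is precisely the condition that demotes the $2^{-s}$ term below the $1/s$ from $\psi^\prime(s)$: it forces $\sigma\to\infty$ (since $2^\sigma>t^2$), so eventually $\sigma>1+\epsilon$ and the Dirichlet-series estimate applies; it gives $|2^{-s}|=2^{-\sigma}<|s|^{-2}$ directly; and since $e>2$ it gives $\exp(-\sigma)<2^{-\sigma}<|s|^{-2}$, so the tail term is $O(1/s^2)$ as well. The secant term is $O(\exp(-\pi t))$, negligible next to $1/s^2$ (this uses that $|s|$ does not grow super-polynomially in $e^{t}$ along such a path, which should be extracted from the hypothesis and is immediate when, say, $\sigma=O(t)$). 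Hence everything on the right except the leading $1/s$ of $\psi^\prime(s)$ is $O(1/s^2)$, and the assertion follows.

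The analytic inputs being in hand, there is no substantive obstacle here; the points demanding care are the $\epsilon$-adjustment and the endpoint behaviour of $\exp(-\sigma)/(1+\epsilon-\sigma)$ in the first regime, and, in the second, the chain $\exp(-\sigma)<2^{-\sigma}<|s|^{-2}$ — which dictates the exact shape of the hypothesis — together with confirming that the super-exponentially small secant term is dominated by $1/s^2$.
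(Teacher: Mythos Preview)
Your proposal is correct and is exactly the paper's approach: the paper's entire argument is the sentence ``Thus we have'' following the display of the three ingredients (the Dirichlet-series estimate for $\log(\zeta(s))''$, Stirling for $\psi'(s)$, and the exponential decay of $\sec^2(\pi s/2)$), and you have simply written out the size comparison that this sentence suppresses. Your flagging of the endpoint issue with $\epsilon$ and of the secant-versus-$1/s^2$ comparison in the second regime is, if anything, more careful than the paper, which does not comment on either point.
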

%\begin{comment}
\begin{figure}[H]
\begin{center}
\includegraphics[scale=.1, viewport=0 0 3300 5000]{twopic}
\caption{Argument of $\log(\zeta(s))^{\prime\prime}$.  On the left, the vertical strip $-9.5\le \sigma\le 10.5$, and $0\le t\le 100$.  On the right, $-14.5\le \sigma\le 15.5$, and $10^4\le t\le 10^4+100$.  The dotted lines denote $\sigma=0$ and $\sigma=1$. }\label{F:2}
\end{center}
\end{figure}
%\end{comment}

On the border of these two asymptotic regimes, we will see a cancellation where
\[
\frac{1}{s}\approx \frac{-\log(2)^2}{2^s},
\]
creating zeros of $\nu(s)$  which we refer to as \textsc{asymptotically trivial of the first kind}.  Equating modulus and argument, this happens when
\begin{gather*}
2^\sigma\approx \log(2)^2\left(\sigma^2+t^2\right)^{1/2}\quad\text{or}\quad \sigma\approx\log(t)/\log(2),\\
\text{and also}\qquad\tan(t\log(2))\approx t/\sigma.
\end{gather*}

With $\sigma$ and $t$ positive, both $\cos(t\log(2))$ and $\sin(t\log(2))$ need to be negative, and since $\sigma$ is very small compared to $t$ we deduce that $t\log(2)$ is slightly larger than $2\pi n+3\pi/2$ for integer $n$; i.e.\ the imaginary part is about $9.1n+6.8$.  The real part is near $1-\log(t)/\log(2\pi)$.  One sees eleven examples of these asymptotically trivial zeros to the left of the critical line on the right side of Figure \ref{F:2}.  

There is a double pole of 
\[
-\frac{\pi^2}{4}\sec^2(\pi (1-s)/2)+\psi^\prime(1-s)
\]
at the negative even integers.  And \eqref{Eq:otoh} implies that as $s\to\infty$ with $\arg(s)$ a constant  $\pi/2-\delta$, $\arg(\log(\zeta(s))^{\prime\prime})$ is asymptotically constant (in fact, asymptotic to $\delta$).  For each double pole arising from a negative even integer, $\nu(s)$ will have by the Argument Principle  a pair of complex conjugate zeros inside the rays $\arg(s)=\pi\pm\delta$.  We refer to these zeros as \textsc{asymptotically trivial of the second kind}.  Examples in the upper half plane can be seen at the bottom of the left side of Figure \ref{F:2}; more examples can be seen in Figure \ref{F:wide}.  It would be interesting to understand the asymptotic behavior of the imaginary part of these zeros.

\begin{figure}[b]
\begin{center}
\includegraphics[scale=.8, viewport=0 0 500 80,clip]{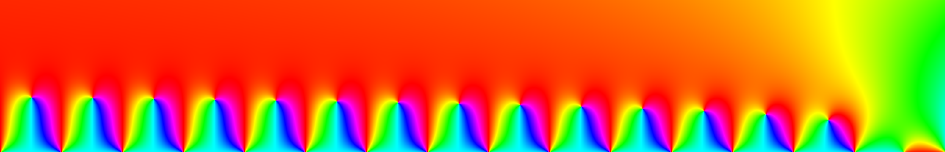}
\caption{Argument of $\log(\zeta(s))^{\prime\prime}$ in the  region $-30\le \sigma\le 1$, and $0\le t\le 5$. }\label{F:wide}
\end{center}
\end{figure}

\subsection*{Zero free region}
From the general theory of Dirichlet series, $\nu(s)$ has a right half plane free of zeros.
\begin{theorem}\label{Th:zerofree}
For $\tre(s)\ge 4.25$, we have that $\nu(s)\ne 0$.
\end{theorem}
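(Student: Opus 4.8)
The plan is to run the standard argument that a Dirichlet series with nonnegative coefficients has a half-plane free of zeros, exploiting that the smallest nonzero term dominates the rest once $\tre(s)$ is large enough. By \eqref{Eq:an2} we have $\nu(s)=\sum_{n\ge 2}a(n)n^{-s}$ with $a(n)\ge 0$ (each summand $\Lambda(d)\log(d)\tau(n/d)$ is $\ge 0$), with $a(1)=0$, and with $a(2)=\log(2)^2>0$ the first nonvanishing coefficient; the series converges absolutely for $\tre(s)>1$ since $a(n)\ll n^\epsilon$, and in particular at $s=4.25$.

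First I would separate off the $n=2$ term: for $\sigma=\tre(s)>1$, the triangle inequality and $a(n)\ge 0$ give
\[
|\nu(s)|\ \ge\ \bigl|a(2)\,2^{-s}\bigr|-\Bigl|\sum_{n\ge 3}a(n)n^{-s}\Bigr|\ \ge\ \log(2)^2\,2^{-\sigma}-\sum_{n\ge 3}a(n)n^{-\sigma}.
\]
For the tail, since $n\ge 3$ and $\sigma\ge 4.25$ we have $n^{-\sigma}\le 3^{\,4.25-\sigma}\,n^{-4.25}$, hence
\[
\sum_{n\ge 3}a(n)n^{-\sigma}\ \le\ 3^{\,4.25-\sigma}\sum_{n\ge 3}a(n)n^{-4.25}\ =\ 3^{\,4.25-\sigma}\bigl(\nu(4.25)-\log(2)^2\,2^{-4.25}\bigr).
\]
Combining, $\nu(s)\ne 0$ whenever $\log(2)^2\,2^{-\sigma}>3^{\,4.25-\sigma}\bigl(\nu(4.25)-\log(2)^2\,2^{-4.25}\bigr)$. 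Multiplying through by $3^\sigma$, the left side becomes $\log(2)^2(3/2)^\sigma$, which is increasing in $\sigma$, while the right side is the constant $3^{4.25}\bigl(\nu(4.25)-\log(2)^2\,2^{-4.25}\bigr)$; so it is enough to check the inequality at the endpoint $\sigma=4.25$, i.e.
\[
\log(2)^2\,(3/2)^{4.25}\ >\ 3^{4.25}\bigl(\nu(4.25)-\log(2)^2\,2^{-4.25}\bigr).
\]

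The only genuine work, and the step on which the value $4.25$ hinges, is the numerical verification of this last inequality. I would compute $\nu(4.25)=\zeta(4.25)\zeta''(4.25)-\zeta'(4.25)^2$ to a few digits — via Euler--Maclaurin as in the Lemma, or by summing the rapidly convergent series for $\zeta,\zeta',\zeta''$ — obtaining $\nu(4.25)\approx 0.0499$; then the left side above is $\approx 2.69$ and the right side is $\approx 2.63$, so the inequality holds. The margin is only about two percent, so this crude argument cannot do much better than $\sigma\ge 4.25$ (indeed $\sigma_0=4.2$ already fails); squeezing out a smaller constant would require a sharper device, e.g.\ running the same comparison on the Euler product $\log(\zeta(s))^{\prime\prime}=\sum_p(\log p)^2 p^{-s}/(1-p^{-s})^2$ with $|1-p^{-s}|\ge 1-p^{-\sigma}$, but that refinement is not needed for the stated theorem.
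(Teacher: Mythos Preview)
Your argument is correct and noticeably more elementary than the paper's. Both begin with the same triangle-inequality step
\[
|\nu(s)|\ \ge\ \frac{a(2)}{2^{\sigma}}-\sum_{n\ge 3}\frac{a(n)}{n^{\sigma}},
\]
but diverge at the tail estimate. The paper controls $\sum_{n\ge 3}a(n)n^{-\sigma}$ by partial summation, feeding in the explicit asymptotic $A(x)=x\,p(\log x)+O(x^{1/2}\log^2 x)$ (with the constant $10$ from the Lemma), then splits into two inequalities involving the integrals of $p(\log t)t^{-\sigma}$ and $\log(t)^2 t^{-\sigma-1/2}$, and finally verifies numerically with $x=40$ that $\sigma_0=4.25$ works. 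Your device --- factoring $n^{-\sigma}\le 3^{4.25-\sigma}n^{-4.25}$ for $n\ge 3$ --- bypasses all of this and reduces everything to the single endpoint check $a(2)2^{-4.25}>\sum_{n\ge 3}a(n)n^{-4.25}$, equivalently $\nu(4.25)<2\log(2)^2 2^{-4.25}\approx 0.0505$, which your computation $\nu(4.25)\approx 0.0499$ confirms (the margin is indeed only about one percent, so the numerics must be done with care). What the paper's route buys is the explicit quantitative lower bound \eqref{Eq:LB}, which is invoked later in the zero-counting theorem; your argument also supplies a usable lower bound of the shape $\log(2)^2 2^{-\sigma}-c\,3^{-\sigma}$, in fact a sharper one, so nothing is lost downstream.
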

\begin{remark}
\emph{Mathematica} shows there is a zero near $s=3.494+23.285i$.
\end{remark}
\begin{proof}
We have by the triangle inequality
\[
\left|\nu(s)\right|\ge \frac{a(2)}{2^\sigma}-\sum_{n=3}^\infty \frac{a(n)}{n^\sigma}
\]
Via summation by parts and the fact that
\[
\lim_{y\to\infty}A(y)y^{-\sigma}=0,
\]
we deduce that, with parameter $x$ to be determined,
\[
\left|\nu(s)\right|\ge
\frac{a(2)}{2^\sigma}-\sum_{n=3}^{x}\frac{a(n)}{n^\sigma}
+\frac{A(x)}{x^\sigma}-\sigma \int_{x}^\infty A(t)t^{-\sigma-1}\, dt.
\]
Via \eqref{Eq:1} it will suffice that we satisfy the two inequalities
\begin{gather*}
\frac{a(2)}{2^\sigma}-\sum_{n=3}^x \frac{a(n)}{n^\sigma}>\frac{1.5}{x^{\sigma/2}},\qquad\text{and}\\
\frac{A(x)}{x^\sigma}-\sigma \int_{x}^\infty p(\log(t))t^{-\sigma}\, dt-\left|10\cdot \sigma \int_{x}^\infty \log(t)^2 t^{-\sigma-1/2}\, dt\right|>-\frac{1}{x^{\sigma/2}}.
\end{gather*}
Once $x>4$ is fixed,
$
a(2)-\sum_{n=3}^x  a(n) \left(2/n\right)^\sigma
$ is an increasing function of $\sigma$, bounded above by $a(2)$, and $(2/\sqrt{x})^\sigma$ is decreasing to $0$.  Thus if the first inequality holds at $\sigma_0$, it will hold on the interval $[\sigma_0,\infty)$.

Next observe
\begin{multline*}
\sigma \int_{x}^\infty p(\log(t))t^{-\sigma}\, dt=\\
x^{-\sigma}\left(x\cdot p(\log(x))+\frac{q_1}{\sigma-1}+\frac{q_2}{(\sigma-1)^2}+\frac{q_3}{(\sigma-1)^3}+\frac{q_4}{(\sigma-1)^4}\right),
\end{multline*}
where the $q_j$ are certain polynomials in $x$ and $\log(x)$ in terms of the Stieltjes constants, positive for $x\ge4$.  
Meanwhile
\begin{multline*}
10\cdot \sigma \int_{x}^\infty \log(t)^2 t^{-\sigma-1/2}\, dt=\\
x^{1/2-\sigma}
\left(10
   \log (x)^2+\frac{r_1}{\sigma -1/2}+\frac{r_2}{\left(\sigma -1/2\right)^2}+\frac{r_3}{\left(\sigma -1/2\right)^3}\right),
\end{multline*}
for certain $r_i$, polynomials  in $\log(x)$ with positive coefficients.
Thus our second inequality is equivalent to
\begin{multline*}
x^{\sigma/2}>x\cdot p(\log(x))+10x^{1/2}\log(x)^2-A(x)\\
+x^{1/2}\left(\sum_{j=1}^4 \frac{ q_j}{(\sigma-1)^j}+\sum_{i=1}^3 \frac{ r_i}{(\sigma-1/2)^i}\right).
\end{multline*}
For fixed $x\ge 4$, the left side is increasing in $\sigma$, and the right side is decreasing in $\sigma$ so again this will hold on an interval $[\sigma_0,\infty)$.  With $x=40$, a calculation verifies that $\sigma_0=4.25$ suffices.  Furthermore, we deduce that for $\sigma>4.25$,
\begin{equation}\label{Eq:LB}
\frac{a(2)}{2^\sigma}-\sum_{n=3}^\infty \frac{a(n)}{n^\sigma}>\frac{.5}{40^{\sigma/2}}.
\end{equation}
\end{proof}

\subsection*{The number of zeros for $\nu(s)$}
Let 
\[
N_\nu(T)=\sharp\left\{\rho\,|\,\nu(\rho)=0, 0<\tim(\rho)<T, -4<\tre(\rho)\right\}.
\]
This count excludes the two flavors of asymptotically trivial zeros described above, except for a $O(1)$ error.
\begin{theorem}
\[
N_\nu(T)=2\left(\frac{T}{2\pi}\log\left(\frac{T}{2\pi}\right)-\frac{T}{2\pi}\right)-\frac{\log(2)}{\pi}T+O\left(\log(T)\right).
\]
\end{theorem}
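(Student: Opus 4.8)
The plan is an argument–principle count in which Stirling's formula applied to $\chi^2(s)$ produces the main term $2\big(\tfrac{T}{2\pi}\log\tfrac{T}{2\pi}-\tfrac{T}{2\pi}\big)$, while the leading Dirichlet coefficient $a(2)=\log(2)^2$ produces $-\tfrac{\log 2}{\pi}T$ in two equal halves: once on the far right of the box, where $\nu(s)\sim a(2)2^{-s}$, and once on the far left, where---after peeling $\chi^2$ off via \eqref{Eq:FE}---the dominant piece of $\nu$ is again $a(2)2^{-(1-s)}$. Fix $\sigma_1=5$. By Theorem \ref{Th:zerofree} every zero of $\nu$ has real part less than $4.25<\sigma_1$, so $N_\nu(T)$ is exactly the number of zeros of $\nu$ in the rectangle $R$ with vertices $-4$, $\sigma_1$, $\sigma_1+iT$, $-4+iT$; assuming (as one may) that no zero has imaginary part exactly $T$, the argument principle gives $2\pi N_\nu(T)=\Delta_{\partial R}\arg\nu+O(1)$, the $O(1)$ accounting for the order-$4$ pole at $s=1$ and small indentations around any zeros on the real axis or on $\tre(s)=-4$.

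I would evaluate the four edges of $\partial R$ as follows. The bottom edge contributes $O(1)$ because $\nu$ is real there. On the right edge $\nu(\sigma_1+it)=a(2)2^{-\sigma_1-it}\big(1+E(t)\big)$ with $|E(t)|\le 2^{\sigma_1}\sum_{n\ge 3}a(n)n^{-\sigma_1}/a(2)<1$, since the $a(n)$ are nonnegative by \eqref{Eq:an2} and $2^{\sigma_1}\sum_{n\ge 3}a(n)n^{-\sigma_1}<a(2)$ by \eqref{Eq:LB}; hence $\arg\nu(\sigma_1+it)=-t\log 2+O(1)$ and the right edge contributes $-T\log 2+O(1)$. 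On the left edge $\sigma=-4$, equation \eqref{Eq:FE} gives $\nu(s)=\chi^2(s)g(s)$ with $g(s)=\nu(1-s)+\big(\psi'(1-s)-(\pi/2)^2\csc(\pi s/2)^2\big)\zeta(1-s)^2$; here $1-s=5-it$, so $\zeta(1-s)^2=1+O(2^{-5})$, $\psi'(1-s)=(5-it)^{-1}+O(t^{-2})$, the term $-(\pi/2)^2\csc(\pi s/2)^2$ equals $(\pi/2)^2\sinh(\pi t/2)^{-2}\ge 0$ (negligible once $t\ge 1$), and $\nu(1-s)=a(2)2^{-(1-s)}\big(1+E_1(t)\big)$ with $|E_1(t)|<1$ as above. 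On this edge the second summand of $g$ is $\asymp (1+t)^{-1}$, with argument confined to the first quadrant and varying by $O(1)$ there, while $|\nu(1-s)|$ stays bounded above and below; hence $g$ is dominated by that summand for $t$ below a fixed bound and by $\nu(1-s)$ above it, the bounded transition interval contributing only $O(1)$, and since $\arg\nu(1-s)=t\log 2+O(1)$ there, $\arg g$ varies by $-T\log 2+O(1)$ along the edge. Meanwhile $2\arg\chi(-4+it)=-2t\log(t/2\pi)+2t+O(1)$ by Stirling, contributing $2T\log(T/2\pi)-2T+O(1)$.

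The crux is the top edge $t=T$, the analogue of estimating $S(T)$ for $\zeta$. Since $a(1)=0$ but $a(2)\ne0$, the correct substitute for $\tre\,\zeta$ is the twisted real part $F(\sigma):=\tre\big(e^{iT\log 2}\nu(\sigma+iT)\big)$, which is the restriction to the real axis of the function $F(z)=\tfrac12\big(e^{iT\log 2}\nu(z+iT)+e^{-iT\log 2}\nu(z-iT)\big)$, analytic for $|\tim(z)|<T$. As $e^{iT\log 2}\nu$ and $\nu$ differ only by a constant factor, $\Delta_{\mathrm{top}}\arg\nu$ equals the variation of $\arg\big(e^{iT\log 2}\nu(\sigma+iT)\big)$ as $\sigma$ runs from $\sigma_1$ to $-4$, which is at most $\pi$ times one more than the number of sign changes of $F(\sigma)$ on $(-4,\sigma_1)$. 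I would bound that number by Jensen's inequality applied to $F$ on two concentric discs about $\sigma_1$: at the centre, $F(\sigma_1)=\tre\big(e^{iT\log 2}\nu(\sigma_1+iT)\big)\ge a(2)2^{-\sigma_1}-\sum_{n\ge 3}a(n)n^{-\sigma_1}\ge c_0>0$ by \eqref{Eq:LB}, while on a circle of fixed radius $|F(z)|\ll\max|\nu|\ll T^{C}$ for an absolute $C$, from \eqref{Eq:6} in the half-planes $\tre(s)\le-\delta$ and $\tre(s)\ge1+\delta$ together with crude bounds $\zeta^{(k)}(s)\ll t^{O(1)}$ in the remaining bounded vertical strip. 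Hence the top edge contributes $O(\log T)$. Summing the four contributions gives $\Delta_{\partial R}\arg\nu=2T\log(T/2\pi)-2T-2T\log 2+O(\log T)$, and dividing by $2\pi$ yields the formula. The two points demanding genuine care are this polynomial-in-$T$ bound for $\nu$ near height $T$ and the recognition that one must twist by $e^{iT\log 2}$ before taking real parts; everything else parallels the classical proof of the Riemann--von Mangoldt formula.
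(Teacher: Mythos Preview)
Your proof is correct and follows essentially the same route as the paper: an argument-principle count on a rectangle with right edge at $\sigma=5$ and left edge at $\sigma=-4$, with the right edge handled via the Dirichlet series and \eqref{Eq:LB}, the left edge via the functional equation \eqref{Eq:FE} plus Stirling for $\chi^2$, and the top edge via Jensen's theorem. The only differences are cosmetic---the paper places the bottom edge at $t=10$ rather than on the real axis (thereby sidestepping the pole at $s=1$ and the blow-up of $\csc(\pi s/2)^2$ near $s=-4$ that makes your claim ``second summand $\asymp(1+t)^{-1}$'' imprecise for small $t$, though harmlessly so), and the paper simply cites Berndt for the $O(\log T)$ on the top edge, whereas you spell out the necessary twist by $e^{iT\log 2}$ that compensates for $a(1)=0$.
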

\begin{proof}  Let $C$ be the boundary (described positively) of the rectangle with vertices $5+i10$, $5+iT$, $-4+iT$, $-4+i10$.  There are no asymptotically trivial zeros inside $C$.  By the functional equation and the zero free region, the nontrivial zeros are inside $C$.  By the Argument Principle, we need to estimate
\begin{multline*}
\frac{1}{2\pi i} \int_C \frac{d}{ds}\log(\nu(s))\, ds=\\
\frac{1}{2\pi i}\left\{\int_{-4+i10}^{5+i10} +\int_{5+i10}^{5+iT} +\int_{5+iT}^{-4+iT} +\int_{-4+iT}^{-4+i10} \right\}\frac{d}{ds}\log(\nu(s))\, ds\\
=\frac{1}{2\pi i}\left(I_1+I_2+I_3+I_4\right).
\end{multline*}
The integral $I_1$ is $O(1)$.  Next, $I_2$ is equal
\begin{equation}\label{Eq:temp}
\left.\log\left(\frac{a(2)}{2^s}\right)\right|_{5+i10}^{5+iT}+\left.\log\left(1+\sum_{n=3}^\infty\frac{a(n)}{a(2)}\left(\frac{2}{n}\right)^s\right)\right|_{5+i10}^{5+iT}.
\end{equation}
Via \eqref{Eq:LB}, we see that
\begin{equation}\label{Eq:13}
1-\sum_{n=3}^\infty\frac{a(n)}{a(2)}\left(\frac{2}{n}\right)^{-5}>.0025,
\end{equation}
thus
\begin{equation}\label{Eq:14}
\tre\left(1+\sum_{n=3}^\infty\frac{a(n)}{a(2)}\left(\frac{2}{n}\right)^{5+it}\right)>0,
\end{equation}
and the argument of the expression inside the second logarithm in \eqref{Eq:temp} is bounded by $\pm \pi/2$.  From the contribution of the first logarithm in \eqref{Eq:temp} we deduce that $I_2=-i\log(2)T+O(1)$.  Via a fairly routine argument based on Jensen's Theorem\footnote{as in, for example, \cite{Berndt}}, one sees that $I_3=O(\log(T))$.

Finally, for 
\[
I_4=\int_{-4+iT}^{-4+i10} \frac{d}{ds}\log(\nu(s))\, ds=\int_{-4+iT}^{-4+i150} \frac{d}{ds}\log(\nu(s))\, ds +O(1)
\]
 we will use the functional equation \eqref{Eq:FE} in the form
\begin{multline}
\nu(s)=\\
\chi^2(s)\nu(1-s) \left(1
+\left(\psi^\prime(1-s)-(\pi/2)^2\csc(\pi s/2)^2 \right)\frac{ \zeta (1-s)^2}{\nu(1-s)}\right).
\end{multline}
We observe that for $t\ge 150$
\begin{equation}\label{Eq:est1}
\left|\psi^\prime(5-it)-(\pi/2)^2\csc(\pi (4+it)/2)^2\right|< 1/140
\end{equation}
by the exponential decay of cosecant  and the Stirling's formula asymptotic for $\psi^\prime(5-it)$.  Also
\begin{gather}
\left|\log(\zeta(5-it))^{\prime\prime}\right|\ge\frac{\log(2)^2}{2^5}-\sum_{n=3}^\infty\frac{\Lambda(n)\log(n)}{n^5}\ge 0.0075,\notag\\
\left|\frac{ \zeta (5-it)^2}{\nu(5-it)}\right|\le \frac{1}{0.0075}<135.\label{Eq:est2}
\end{gather}
The product of \eqref{Eq:est1} and \eqref{Eq:est2} is $<1$ in absolute value, and thus
\[
\tre\left(
1
+\left(\psi^\prime(5-it)-(\pi/2)^2\csc(\pi (4+it)/2)^2 \right)\frac{ \zeta (5-it)^2}{\nu(5-it)}
\right)>0,
\]
and the argument of this expression is bounded between $-\pi/2$ and $\pi/2$.
This implies that on the vertical line $-4+it$, $T\ge t\ge 150$,
\[
\tim(\log(\nu(s)))=\tim(\log(\chi^2(s)\nu(1-s) ))+O(1).
\]
And similarly, via \eqref{Eq:13} and \eqref{Eq:14} we deduce that on this line,
\[
\tim(\log(\nu(s)))=\tim(\log(\chi^2(s)\log(2)^2 2^{s-1} ))+O(1).
\]
Via Stirling's formula
\begin{gather*}
\left.\arg(\chi^2(s))\right|_{-4+iT}^{-4+i150}=2T\log(T/2\pi)-2T+O(1),\quad\text{while}\\
\left.\arg(\log(2)^2 2^{s-1})\right|_{-4+iT}^{-4+i150}=-\log(2)T,\quad\text{so}\\
\tim(I_4)=2T\log(T/2\pi)-2T-\log(2)T+O(1).
\end{gather*}
\end{proof}
\subsection*{Zero density results}
\begin{proposition}  For $p(t)$ as before
\begin{equation}\label{Eq:7}
A(x)=
x\cdot p(\log(x))+O_\epsilon \left(x^{1/3+\epsilon}\right).
\end{equation}
\end{proposition}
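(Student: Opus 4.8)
The plan is to split $A(x)$ into two logarithmically weighted divisor sums and then, for each, to invoke the power-saving error term in the (generalized) Dirichlet divisor problem. Since $\bigl(\zeta(s)^2\bigr)''=2\zeta'(s)^2+2\zeta(s)\zeta''(s)$, the definition of $\nu$ gives the identity
\[
\nu(s)=2\,\zeta(s)\zeta''(s)-\tfrac12\bigl(\zeta(s)^2\bigr)''.
\]
Expanding the Dirichlet series on the right for $\tre(s)>1$, one has $\zeta(s)\zeta''(s)=\sum_n\lambda_2(n)n^{-s}$ with $\lambda_2(n)=\sum_{d\mid n}(\log d)^2$, and $\bigl(\zeta(s)^2\bigr)''=\sum_n\tau(n)(\log n)^2 n^{-s}$, so $a(n)=2\lambda_2(n)-\tfrac12\tau(n)(\log n)^2$ (one checks this agrees with \eqref{Eq:an1}), and hence $A(x)=2\Lambda_2(x)-\tfrac12 T(x)$ with $\Lambda_2(x)=\sum_{de<x}(\log d)^2$ and $T(x)=\sum_{n<x}\tau(n)(\log n)^2$.

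The term $T(x)$ I would handle by Riemann--Stieltjes integration against the classical estimate $D(y):=\sum_{n<y}\tau(n)=y\log y+(2C_0-1)y+\Delta(y)$ together with Voronoi's bound $\Delta(y)\ll y^{1/3}\log y$. Integration by parts gives
\[
T(x)=(\log x)^2 D(x)-2\int_1^x\frac{\log y}{y}\,D(y)\,dy,
\]
in which the main term of $D$ produces a main term for $T(x)$ of the shape $x\cdot(\text{a cubic polynomial in }\log x)$, while the $\Delta$-part is $(\log x)^2\Delta(x)-2\int_1^x\frac{\log y}{y}\Delta(y)\,dy\ll x^{1/3}(\log x)^3=O_\epsilon(x^{1/3+\epsilon})$.

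The term $\Lambda_2(x)$ carries the real analytic content, and here I would differentiate the shifted divisor problem. Since $\zeta(s)\zeta''(s)=\partial_\alpha^2\big|_{\alpha=0}\zeta(s)\zeta(s-\alpha)$, we have $\Lambda_2(x)=\partial_\alpha^2\big|_{\alpha=0}\sum_{n<x}\sigma_\alpha(n)$, where $\sigma_\alpha(n)=\sum_{d\mid n}d^{\alpha}$ has Dirichlet series $\zeta(s)\zeta(s-\alpha)$. For $\alpha$ in a small disc about the origin, the Voronoi summation formula for $\zeta(s)\zeta(s-\alpha)$ --- obtained by moving the contour, using the functional equation, and estimating the resulting exponential sum by van der Corput's first-derivative test, the small shift producing only a uniform, lower-order change --- yields
\[
\sum_{n<x}\sigma_\alpha(n)=M_\alpha(x)+\Delta_\alpha(x),
\]
where $M_\alpha(x)=\zeta(1-\alpha)\,x+\frac{\zeta(1+\alpha)}{1+\alpha}\,x^{1+\alpha}-\tfrac12\zeta(-\alpha)$ is the sum of the residues of $\zeta(s)\zeta(s-\alpha)x^s/s$ at $s=1,1+\alpha,0$ --- holomorphic in $\alpha$ at $0$, the simple poles of the first two summands there cancelling --- and $\Delta_\alpha(x)=O_\epsilon(x^{1/3+\epsilon})$ uniformly for $|\alpha|\le 1/\log x$. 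Both sides being holomorphic in $\alpha$ near $0$, applying $\partial_\alpha^2\big|_{\alpha=0}$ by Cauchy's integral formula over the circle $|\alpha|=1/\log x$ (on which $x^{|\alpha|}\le e$) turns the main term into $x\cdot(\text{a cubic in }\log x)$ and bounds the error by $\partial_\alpha^2\big|_{\alpha=0}\Delta_\alpha(x)\ll(\log x)^2\max_{|\alpha|=1/\log x}|\Delta_\alpha(x)|\ll_\epsilon x^{1/3+\epsilon}$. Thus $\Lambda_2(x)=x\cdot(\text{a cubic in }\log x)+O_\epsilon(x^{1/3+\epsilon})$.

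Combining the two pieces gives $A(x)=x\cdot q(\log x)+O_\epsilon(x^{1/3+\epsilon})$ for some cubic polynomial $q$. It is unnecessary to identify $q$ by computation: the Lemma already furnishes $A(x)=x\cdot p(\log x)+O(x^{1/2}\log^2 x)$, and subtracting the two expansions forces $(p-q)(\log x)\ll x^{-1/2}\log^2 x\to 0$, so the polynomial $p-q$ vanishes identically and $q=p$, which is the assertion. The sole genuine obstacle is the uniform shifted-divisor bound $\Delta_\alpha(x)\ll_\epsilon x^{1/3+\epsilon}$; this is where Voronoi summation and van der Corput's method are essential, exactly as in the classical divisor problem, and where the only real work lies. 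The elementary hyperbola method by itself recovers only the Lemma's $O(x^{1/2}\log^2 x)$, so some such input is unavoidable; everything else is bookkeeping of polynomial main terms.
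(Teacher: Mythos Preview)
Your argument is correct, but it follows a different path from the paper's. The paper does not decompose $A(x)$ at all: it simply observes that $\nu(s)$ has its own approximate functional equation (from \eqref{Eq:FE} and the bounds \eqref{Eq:6}), and then runs the standard $k=2$ Voronoi/van der Corput argument of Titchmarsh, Theorem~12.2, directly on $\nu(s)$ in place of $\zeta(s)^2$. The extra term $\chi^2(s)\bigl(\psi'(1-s)-(\pi/2)^2\csc^2(\pi s/2)\bigr)\zeta(1-s)^2$ in \eqref{Eq:FE} is of lower order by a full power of $t$, so it is absorbed harmlessly and the proof is essentially a line-by-line transcription of the divisor-problem argument.

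Your route instead writes $\nu(s)=2\zeta(s)\zeta''(s)-\tfrac12(\zeta(s)^2)''$, reduces the second piece to the classical divisor remainder by partial summation, and handles the first piece by differentiating a \emph{shifted} divisor sum $\sum_{n<x}\sigma_\alpha(n)$ in $\alpha$ via Cauchy's formula on $|\alpha|=1/\log x$, using a Voronoi bound $\Delta_\alpha(x)\ll_\epsilon x^{1/3+\epsilon}$ uniform in that disc. This is perfectly sound, and the Cauchy-differentiation device is a clean way to avoid re-doing the Voronoi calculation with log-weights. The trade-off: the paper's version needs the functional equation for $\nu$ (already in hand) but nothing about shifted divisor sums, whereas yours avoids any special analysis of $\nu$ but requires the uniform-in-$\alpha$ error bound, which---while true and provable exactly as you sketch---is a slightly stronger input than the unshifted $\Delta(x)\ll x^{1/3}\log x$ you cite for the $T(x)$ piece. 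Your final identification $q=p$ by comparison with the Lemma is a nice touch that sidesteps computing the main-term coefficients.
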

\begin{proof}
Starting with \eqref{Eq:FE} and \eqref{Eq:6}, the proof very closely follows the $k=2$ case of the error estimates for the divisor function, as in Titchmarsh  \cite[Theorem 12.2]{Tit}.
\end{proof}
%The plot of $A(x)-x\cdot p(\log(x))$ for $1\le x\le 10^4$ given in Figure \ref{F:1} leads one to wonder  how much below this one can get the error bound.  The question is of interest because of the connection to zero density results; see Theorem \ref{Th:2}.

%\begin{figure}
%\begin{center}
%\includegraphics[scale=1, viewport=0 0 400 230,clip]{voronoi}
%\caption{$A(x)-x p(\log(x))$ v. $\pm x^{1/3}$.}\label{F:1}
%\end{center}
%\end{figure}

\begin{proposition}  Let
\[
\phi(s)=\left(1-2^{1-s}\right)^4\nu(s)
\]
The abscissa of convergence $\sigma_c$ for the series defining $\phi(s)$ is $\le 1/3$.
\end{proposition}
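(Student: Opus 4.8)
The plan is to reduce the statement to the asymptotics for $A(x)$ already established in \eqref{Eq:7}, using the fact that multiplying $\nu(s)$ by $(1-2^{1-s})^4$ replaces the summatory function $A(x)$ by a fourth finite difference of itself. First I would expand
\[
(1-2^{1-s})^4=\sum_{k=0}^4\binom{4}{k}(-2)^k\,(2^k)^{-s},
\]
a Dirichlet polynomial supported on $\{1,2,4,8,16\}$, so that if $b(n)$ denotes the $n$th Dirichlet coefficient of $\phi(s)$ and $\Phi(x)=\sum_{n<x}b(n)$, the convolution identity gives
\[
\Phi(x)=\sum_{k=0}^4\binom{4}{k}(-2)^k\,A\!\left(\frac{x}{2^k}\right).
\]

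Next I would insert the estimate $A(y)=y\,p(\log y)+O_\epsilon(y^{1/3+\epsilon})$ from \eqref{Eq:7}. The five error terms combine to $O_\epsilon(x^{1/3+\epsilon})$, while the main terms contribute
\[
x\sum_{k=0}^4\binom{4}{k}(-1)^k\,p(\log x-k\log 2),
\]
which is exactly the fourth backward difference (with step $\log 2$) of the polynomial $p$, evaluated at $\log x$. Since $p$ has degree $3$, every fourth difference of $p$ vanishes identically, so the whole main term cancels and $\Phi(x)=O_\epsilon(x^{1/3+\epsilon})$. Finally, Abel summation in the form
\[
\sum_{n\le N}b(n)n^{-s}=\Phi(N)N^{-s}+s\int_1^N\Phi(t)\,t^{-s-1}\,dt
\]
shows that both the boundary term and the integral converge as $N\to\infty$ whenever $\tre(s)>1/3+\epsilon$; letting $\epsilon\to 0$ yields $\sigma_c\le 1/3$.

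The one point that is not mere bookkeeping is the cancellation of the main term: it requires the full strength of \eqref{Eq:7}, i.e.\ the exponent $1/3$ rather than the $1/2$ of the Lemma, together with the fact that $p$ has degree exactly $3$ (equivalently, that $\nu$ has a pole of order $4$ at $s=1$). This is precisely why four factors of $(1-2^{1-s})$ are taken: three would leave a surviving term of size $\gg x$ and give only $\sigma_c\le 1$, whereas four annihilate the polynomial completely and push the growth of $\Phi(x)$ down to the size of the error term. Checking that the finitely many error terms really do combine to $O_\epsilon(x^{1/3+\epsilon})$, and that the convolution identity for $b(n)$ is applied correctly, is routine.
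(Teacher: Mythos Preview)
Your proposal is correct and follows essentially the same route as the paper: expand $(1-2^{1-s})^4$ as a Dirichlet polynomial, write the summatory function $B(x)$ of $\phi$ as an alternating combination of the $A(x/2^k)$, insert \eqref{Eq:7}, and observe that the main term is $x$ times the fourth finite difference of the cubic $p$, hence zero, leaving $B(x)=O_\epsilon(x^{1/3+\epsilon})$. The only cosmetic difference is in the final step: the paper invokes the standard formula $\sigma_c=\limsup\log|B(x)|/\log x$ (Montgomery--Vaughan, Theorem~1.3), whereas you spell out the Abel-summation argument directly; these are of course equivalent.
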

\begin{proof}  
The Dirichlet series expansion of $\phi(s)$ is $\sum_n b(n)n^{-s}$, where, if $2^j||n$,
 \[
b(n)=\sum_{m=0}^{\min(4,j)}\binom4m(-2)^ma(n/2^m).
 \]
With $B(x)=\sum_{n\le x}b(n)$, we have that
\begin{align*}
B(x)=&\sum_{m=0}^4\binom4m(-2)^m\sum_{\substack{k\le x\\2^m|k}}a(k/2^m)\\
=&\sum_{m=0}^4\binom4m(-2)^m\sum_{n\le x/2^m }a(n).
\end{align*}
Via \eqref{Eq:7} we see that
\begin{multline*}
B(x)=\sum_{m=0}^4\binom4m(-2)^m\left(\frac{x}{2^m} \cdot p(\log(x)-m\log(2))+O_\epsilon\left(x^{1/3+\epsilon}\right)\right)\\
=x\cdot \sum_{m=0}^4\binom4m(-1)^m p(\log(x)-m\log(2))+O_\epsilon\left(x^{1/3+\epsilon}\right).
\end{multline*}
With the shift operator $Ep(t)=p(t-\log(2))$ and difference operator $\Delta p=(I-E)p$, the main term is $x\cdot \Delta^4 p(\log(x))=0$, as $p$ has degree three and $\Delta$ reduces the degree.  Thus
\[
B(x)=O_\epsilon\left(x^{1/3+\epsilon}\right),
\]
and so for every $\epsilon>0$,
\begin{multline*}
\limsup_{x\to\infty} \frac{\log\left|B(x)\right|}{\log(x)}\le\\
\limsup_{x\to\infty} \frac{(1/3+\epsilon)\log(x)+\log(C(\epsilon))}{\log(x)}\le\frac{1}{3}+\epsilon,
\end{multline*}
and so by \cite[Theorem 1.3]{MV}, we obtain $\sigma_c\le 1/3$. 
\end{proof}

\begin{theorem}\label{Th:2}
If for positive $\delta$ we denote by $N_{5/6+\delta}(T)$ the number of zeros of $\nu(s)$ in the region $|\tim(s)|\le T$, $5/6+\delta\le\tre(s)$, then
\[
N_{5/6+\delta}(T)\ll_\delta T.
\]
\end{theorem}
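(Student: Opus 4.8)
The plan is to combine Littlewood's lemma with a mean-square estimate. I would work throughout with $\phi(s)=(1-2^{1-s})^4\nu(s)$, which is entire: the fourth-order zero of $(1-2^{1-s})^4$ at $s=1$ cancels the fourth-order pole of $\nu$ there. Every zero of $\nu(s)$ with $\tre(s)>5/6+\delta$ is a zero of $\phi(s)$ of at least the same multiplicity, so $N_{5/6+\delta}(T)$ is at most the number of zeros of $\phi$ with $\tre(s)>5/6+\delta$, $|\tim(s)|\le T$; by Theorem~\ref{Th:zerofree} these all lie in the rectangle $\mathcal{R}$ with right edge $\tre(s)=5$, left edge $\tre(s)=\sigma_0$, and horizontal edges at heights $c_0$ and $T$, for any $\sigma_0<5/6+\delta$, and by the conjugate symmetry $\phi(\bar s)=\overline{\phi(s)}$ it is enough to bound the number of zeros of $\phi$ in $\mathcal{R}$. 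I would fix $\sigma_0$ with $1/2<\sigma_0<5/6+\delta$ and $c_0=O(1)$, adjusted (for the given $T$) so that $\phi$ does not vanish on $\partial\mathcal{R}$; only finitely many such values are excluded, and all estimates below are uniform for $\sigma_0$ in a fixed compact subinterval of $(1/2,\infty)$. The zeros of $\phi$ in $\mathcal{R}$ account for $N_{5/6+\delta}(T)$ up to $O(1)$.

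Littlewood's lemma \cite[\S 9.9]{Tit} applied to $\phi$ on $\mathcal{R}$ (legitimate since $\phi$ is entire and $|\phi(5+it)|\ge 0.77\cdot 0.0075>0$, cf.\ \eqref{Eq:est2}) yields
\begin{multline*}
2\pi\sum_{\substack{\phi(\rho)=0\\ \rho\in\mathcal{R}}}\bigl(\tre(\rho)-\sigma_0\bigr)
=\int_{c_0}^{T}\log\bigl|\phi(\sigma_0+it)\bigr|\,dt-\int_{c_0}^{T}\log\bigl|\phi(5+it)\bigr|\,dt\\
+\int_{\sigma_0}^{5}\arg\phi(\sigma+iT)\,d\sigma-\int_{\sigma_0}^{5}\arg\phi(\sigma+ic_0)\,d\sigma .
\end{multline*}
On the left each zero with $\tre(\rho)>5/6+\delta$ contributes at least $5/6+\delta-\sigma_0>0$ while every other term is positive, so it suffices to bound the right side by $O_\delta(T)$. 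The second integral is $O(T)$ since $|\phi(5+it)|$ is bounded above and below by positive constants. The two $\arg$-integrals together are $\le O(\log T)$: their leading contribution is the variation of $\arg\phi$ along $\tre(s)=5$, where $\arg\phi(5+it)$ decreases by $T\log2+O(1)$ (this decrease is negative, hence harmless for an upper bound), and the remaining variation along the horizontal edges is $O(\log T)$ by a Jensen's-theorem argument exactly as for $I_3$ above (cf.\ \cite{Berndt}), using the polynomial bound $\phi(s)\ll_\epsilon|s|$ for $\tre(s)\ge1/3+\epsilon$ --- which follows by partial summation from the estimate $B(x)\ll_\epsilon x^{1/3+\epsilon}$ proved in establishing $\sigma_c\le1/3$. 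Since $\log u\le\tfrac12 u^2$ for $u>0$, everything reduces to the mean-square estimate
\[
\int_{c_0}^{T}\bigl|\phi(\sigma_0+it)\bigr|^2\,dt\ll_{\sigma_0}T .
\]

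This mean-square bound is the main obstacle. Since $\sigma_0>1/2$ and $b(n)\ll n^\epsilon$, the series $\sum_n|b(n)|^2n^{-2\sigma_0}$ converges, and together with the finite-order bound above this is exactly the input for Carlson's mean-square theorem for Dirichlet series (see \cite[Ch.\ 5]{MV}), giving $\tfrac1T\int_0^{T}|\phi(\sigma_0+it)|^2\,dt\to\sum_n|b(n)|^2n^{-2\sigma_0}$. For a self-contained argument I would instead split $\phi(s)=\sum_{n\le X}b(n)n^{-s}+\phi_X(s)$ with $X$ a large fixed power of $T$: the Montgomery--Vaughan mean value theorem \cite{MV} bounds $\int_0^T|\sum_{n\le X}b(n)n^{-\sigma_0-it}|^2\,dt$ by $T\sum_n|b(n)|^2n^{-2\sigma_0}+O\!\bigl(\sum_n|b(n)|^2n^{1-2\sigma_0}\bigr)\ll_{\sigma_0}T$, both sums converging because $\sigma_0>1/2$, while partial summation and $B(x)\ll_\epsilon x^{1/3+\epsilon}$ give $\phi_X(\sigma_0+it)\ll_{\sigma_0}(1+|t|)X^{1/3+\epsilon-\sigma_0}$, so the tail contributes $\ll_{\sigma_0}T^3X^{2/3+2\epsilon-2\sigma_0}=o(T)$ once $X\gg T^{1/(\sigma_0-1/3-\epsilon)}$. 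Then the right side of Littlewood's identity is $O_\delta(T)$, and dividing by $2\pi(5/6+\delta-\sigma_0)$ gives $N_{5/6+\delta}(T)\ll_\delta T$; the asymptotically trivial zeros and the $O(T)$ extra zeros of $\phi$ on $\tre(s)=1$ introduced by the factor $(1-2^{1-s})^4$ are comfortably within this bound.
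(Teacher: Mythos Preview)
Your approach is correct in spirit but differs from the paper's, and the self-contained variant contains a slip worth flagging.

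\textbf{Comparison with the paper.} The paper does not use Littlewood's lemma. Instead it covers the strip $m<\tim(s)\le m+1$, $5/6+\delta\le\tre(s)\le x_0$ by discs $K_{r,m}$ and invokes a Jensen-type corollary (from \cite{Nark}) that bounds the number of zeros of $\phi$ inside $K_{r,m}$ by $C\iint_{K_{R,m}}|\phi|^2$. Summing over $m$ reduces everything to the same mean-square integral you need, but the paper obtains that mean-square bound differently: from $\sigma_c\le 1/3$ and Bohr's theorem it gets the abscissa of \emph{uniform} convergence $\le 5/6$, and then appeals to a general mean-square result for uniformly convergent Dirichlet series. This is why the constant $5/6$ appears in the statement. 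Your route via Carlson's theorem (finite order plus $\sum|b(n)|^2n^{-2\sigma_0}<\infty$) is arguably cleaner and, since it only needs $\sigma_0>1/2$, would in fact yield the stronger conclusion $N_{1/2+\delta}(T)\ll_\delta T$.

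\textbf{The slip.} In your self-contained alternative via Montgomery--Vaughan you assert that $\sum_n|b(n)|^2n^{1-2\sigma_0}$ converges because $\sigma_0>1/2$. It does not: from $|b(n)|\ll n^\epsilon$ one only gets convergence for $\sigma_0>1$, and your $\sigma_0<5/6+\delta$ may well be below $1$. The fix is easy: that sum, truncated at $X$, is $\ll X^{2-2\sigma_0+\epsilon}$, and balancing against the tail bound $T^3X^{2/3-2\sigma_0+2\epsilon}$ one checks that a suitable $X$ exists precisely when $\sigma_0>7/9$; since $7/9<5/6$ you can simply choose $\sigma_0$ between $7/9$ and $5/6+\delta$. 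Alternatively, just keep the Carlson argument, which is already correct as you stated it and does not need this splitting.

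A minor wording issue: the two $\arg$-integrals are not $O(\log T)$ as you first write --- their leading term is $-(5-\sigma_0)T\log 2$ from the right-edge variation --- but you immediately note this term is negative and hence harmless for the upper bound, so the logic is fine.
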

\begin{proof}
The zeros of  $\nu(s)$ coincide with the zeros  of  $\phi(s)$.   We will imitate the proof of \cite[Theorem 6.18]{Nark}.  For $x_0>4.25$, and any integer $m$, set $K_{r,m}$ to be the circle with center $s_0=x_0+(1/2+m)i$ and radius $r=|x_0-5/6-\delta+i/2|$.  The circle passes through $5/6+\delta+mi$ and $5/6+\delta+(m+1)i$.  Increasing $x_0$ if necessary, the circle lies to the right of the line $\tre(s)=5/6+\delta/2$.  Set $K_{R,m}$ to be the circle with center $s_0=x_0+(1/2+m)i$ and radius $R=x_0-5/6-\delta/2$.
Finally let
\[
A=A(x_0)=2\inf_{\tre(s)=x_0}\left|\phi(s)\right|.
\]
The proof of Theorem \ref{Th:zerofree} implies $A>0$.  Now \cite[Corollary 2, p.260]{Nark}, a corollary to Jensen's Theorem,  implies there exists $C=C(r,R,A)$ such that the number of zeros of $\phi(s)$ in the rectangle 
\[
5/6+\delta\le \tre(s)\le x_0,\quad m<\tim(s)\le m+1
\]
does not exceed
\begin{multline*}
C\cdot \iint_{K_{R,m}}\left|\phi(x+iy)\right|^2\, dxdy\le \\
C\cdot \int_{5/6+\delta/2}^{x_0+R}\int_{m+1/2-R}^{m+1/2+R} \left|\phi(x+iy)\right|^2\, dy\,dx.
\end{multline*}
Summing over integers $m\in [-T,T]$ we deduce that
\[
N_{5/6+\delta}(T)=O\left(\int_{5/6+\delta/2}^{x_0+R}\int_{-T+1/2-R}^{T+1/2+R} \left|\phi(x+iy)\right|^2\, dy\,dx\right).
\]
From \cite[Corollary, p. 315]{Nark}, we deduce that 
\[
\int_{5/6+\delta/2}^{x_0+R}\int_{-T+1/2-R}^{T+1/2+R} \left|\phi(x+iy)\right|^2 \, dy\, dx\ll_\delta T.
\]
\end{proof}
\begin{remark}
The referee points out a mistake in the proof of \cite[Corollary, p. 315]{Nark}, and supplied a correction.  In the notation of that source, for $x\ge 1/2+\epsilon$, we have $2x-\epsilon>1+\epsilon$ so that $g(2x-\epsilon+it)$ converges absolutely.  This is all the proof requires, not the reference to Bohr and uniform convergence.
\end{remark}

\subsection*{Appendix: Numerical methods}
The graphics in Figures \ref{F:2} and \ref{F:wide} require the numerical computation of $\zeta(s)\zeta^{\prime\prime}(s)-\zeta^\prime(s)^2$ on a large grid of points in the complex plane.
Numerical computation of derivatives of a function $f(x)$ is often done by a method called Richardson extrapolation \cite[\S 5.7]{recipes}.  One has that
\begin{gather*}
\frac{f(x+h)-f(x-h)}{2h}=f^\prime(x)+\frac16f^{(3)}(x)h^2+O\left(h^4\right),\\
\frac{f(x+2h)-f(x-2h)}{4h}=f^\prime(x)+\frac23f^{(3)}(x)h^2+O\left(h^4\right),\\
\end{gather*}
so an appropriate linear combination of the left sides of the two equations computes $f^\prime(x)$ up to an error $O(h^4)$.  This can be readily generalized to computing each value on a rectangular grid of points of $\zeta(s)\zeta^{\prime\prime}-\zeta^\prime(s)^2$, up to an error $O(h^8)$,  with (asymptotically) a single evaluation of $\zeta(s)$.   One uses the saved function values at $\zeta(s\pm h)$, $\zeta(s\pm i h)$, $\zeta(s+(\pm h\pm i h))$, as well as $\zeta(s)$, and the solution to a linear system of 9 equations in 9 unknowns.

\subsubsection*{Acknowledgements}   Thanks to the anonymous referee for  careful reading of the manuscript and numerous helpful suggestions.

\end{document}